\documentclass[12pt,draft]{scrartcl}

\usepackage[]{amsmath, amssymb,amsfonts,amsthm,mathtools}
\usepackage{braket}
\usepackage{enumitem}
\usepackage{comment}
\usepackage{color}
\usepackage{multicol}

\makeatletter
\DeclareOldFontCommand{\rm}{\normalfont\rmfamily}{\mathrm}
\DeclareOldFontCommand{\sf}{\normalfont\sffamily}{\mathsf}
\DeclareOldFontCommand{\tt}{\normalfont\ttfamily}{\mathtt}
\DeclareOldFontCommand{\bf}{\normalfont\bfseries}{\mathbf}
\DeclareOldFontCommand{\it}{\normalfont\itshape}{\mathit}
\DeclareOldFontCommand{\sl}{\normalfont\slshape}{\@nomath\sl}
\DeclareOldFontCommand{\sc}{\normalfont\scshape}{\@nomath\sc}
\makeatother

\theoremstyle{plain}
\newtheorem{theorem}{Theorem}[section]
\newtheorem{lemma}[theorem]{Lemma}
\newtheorem{proposition}[theorem]{Proposition}

\theoremstyle{definition}
\newtheorem{definition}[theorem]{Definition}
\newtheorem{conjecture}[theorem]{Conjecture}


\title {Monomial generalized almost perfect nonlinear functions} 
\author{
Masamichi Kuroda
}
\date{}

\begin{document}
\maketitle
	
\begin{abstract}
Generalized almost perfect nonlinear (GAPN) functions were defined to satisfy some generalizations of basic properties of almost perfect nonlinear (APN) functions for even characteristic. 
In this paper, we study monomial GAPN functions for odd characteristic. 
In particular, we give all monomial GAPN functions whose algebraic degree are maximum or minimum on a finite field of odd characteristic. 
\end{abstract}

{\footnotesize {\it Keywords:} 
APN function, 
GAPN function, 
exceptional exponent, 
Gold function, 
inverse permutation, 
EA-equivalent, 
algebraic degree, 
finite field
}

{\footnotesize {\it 2010 MSC:}
94A60, 
05B25  
}

\section{Introduction}

Let $F = \mathbb{F}_{p^n}$ be a finite field of characteristic $p$. 
A function $f \colon F \to F$ is an almost perfect nonlinear (APN) function if 
\begin{align*}
N_f (a, b) \coloneqq \# \Set{x \in F | 
D_{a}f(x) \coloneqq f(x+a) - f (x) =b} \leq 2
\end{align*}
for all $a \in F^\times$ and $b \in F$. 
When $p=2$, such functions have useful properties and applications 
in cryptography, finite geometries and so on. 
On the other hand, APN functions for odd characteristic have quite different properties from 
the even characteristic case. 
In \cite{GAPN1}, 
the definition of APN functions for odd characteristic was modified to satisfy some similar properties of APN functions for even characteristic. 
In fact, the author defined a generalized almost perfect nonlinear (GAPN) functions as follows 
(see \cite[Definition 1.1]{GAPN1}): 
A function $f \colon F \to F$ is a GAPN function if 
\begin{align*}
\tilde{N}_f (a, b) \coloneqq \# \Set{x \in F | 
\tilde{D}_{a}f(x) \coloneqq \sum_{i \in \mathbb{F}_{p}}f(x+ia) =b} \leq p
\end{align*}
for all $a \in F^\times$ and $b \in F$. 
Note that when $p=2$, GAPN functions coincide with APN functions. 
In addition, a few examples of GAPN functions on $F = \mathbb{F}_{p^n}$ was constructed. 
For example, 
\begin{itemize}
\item the inverse permutation 
\begin{align*}
f \colon F \longrightarrow F, \ \ x \longmapsto x^{p^n - 2}
\end{align*}
is a GAPN function of algebraic degree $n (p-1) - 1$ if $p$ is odd, and 
\item the generalized Gold function 
\begin{align*}
f \colon F \longrightarrow F, \ \ x \longmapsto x^{p^i + p - 1}. 
\end{align*}
is a GAPN function of algebraic degree $p$ if $\gcd (i,n) = 1$. 
\end{itemize}
Here see \cite[Section 2]{GAPN1} for the algebraic degree, and see Table 1 below for the Gold functions. 

In this paper, we study monomial GAPN functions for odd characteristic. 
Note that monomial APN functions for even characteristic have been studied by many researchers. 
The following Table \ref{monomial APN} is a complete list, up to CCZ-equivalence, of known monomial APN functions for even characteristic, 
where $d^{\circ} (f_d)$ is the algebraic degree of $f_d$. 
We will give a generalization of Welch functions (see sub-subsection \ref{generalized Welch}). 
\begin{table}[h]
\begin{center}
\caption{Known monomial APN functions $f_d (x) = x^d$ on $\mathbb{F}_{2^n}$} \label{monomial APN}
  \begin{tabular}{|l|c|c|c|c|} \hline
& Exponents $d$ & Conditions & $d^{\circ} (f_d)$ & References \\ \hline \hline 
Gold functions & $2^i+1$ & $\gcd (i,n) = 1$ & $2$ & 
\cite{Gold1968maximal} \cite{Nyberg1994differentially} \\ \hline
Kasami functions & $2^{2i} - 2^i+1$ & $\gcd (i,n) = 1$ & $i+1$ & 
\cite{JW93} \cite{Kasami71} \\ \hline
Welch functions & $2^t+3$ & $n = 2t + 1$ & $3$ & \cite{Dobbertin99Welch} \\ \hline
Niho functions & $2^t + 2^{\frac{t}{2}}-1$, $t$ is even & $n=2t+1$ & $\frac{t+2}{2}$ & 
\cite{Dobbertin99Niho} \\ 
                  & $2^t + 2^{\frac{3t+1}{2}}-1$, $t$ is odd & $n=2t+1$ & $t+1$ &  \\ \hline
Inverse function & $2^{2t}-1$ & $n=2t+1$ & $n-1$ & 
\cite{BD94} \cite{Nyberg1994differentially} \\ \hline
Dobbertin functions & $2^{4t}+2^{3t}+2^{2t}+2^t - 1$ & $n=5t$ & $t+3$ & \cite{Dobbertin01} \\ \hline
  \end{tabular}
\end{center}
\end{table}

Every function $f \colon F \to F$ can be represented uniquely as a polynomial function 
$f(x) = \sum_{d=0}^{p^n - 1} c_d x^d \in F [x]$. 
We can extend $f$ to an extension field of $F$ by using this unique polynomial formula. 
Then we can define a generalization of exceptional APN functions as follows 
(see \cite{AMR10} and \cite{HM2011} for exceptional APN functions): 
\begin{definition}
\begin{enumerate}
\item[(1)] A function $f \colon F \to F$ is \textbf{$p$-exceptional} 
if $f$ is a GAPN function on $F$ and 
is also GAPN function on infinitely many extension fields of $F$. 
\item[(2)] The exponent $d$ is 
$p$-\textbf{exceptional} if $f(x) = x^d$ is a GAPN function 
on infinitely many extension fields of $\mathbb{F}_{p}$. 
\end{enumerate}
\end{definition}
Note that $2$-exceptional exponents are so-called exceptional exponents. 
For any $p$, generalized Gold functions are $p$-exceptional clearly. 
When $p=2$, Kasami functions are also $2$-exceptional functions. 
In addition, the following Theorem was conjectured by Dillon \cite{Dillon02} and was proved by 
Hernando and McGuire \cite{HM2011}: 

\begin{theorem}
When $p=2$, the only $2$-exceptional monomial APN functions are the Gold and Kasami functions. In other words, the only $2$-exponential exponents are the Gold and Kasami numbers. 
\end{theorem}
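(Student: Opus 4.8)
The plan is to recast the property ``$d$ is $2$-exceptional'' as a statement about rational points on an algebraic surface and then invoke the Lang--Weil estimate. First I would reduce to the case that $d$ is odd: writing $d = 2^{k} d'$ with $d'$ odd, we have $x^{d} = (x^{d'})^{2^{k}}$ as a function on any field of characteristic $2$, so $x^{d}$ and $x^{d'}$ are APN on exactly the same fields, and $d$ is exceptional if and only if $d'$ is. Next, using the standard fact that in characteristic $2$ a function $f$ is APN on $\mathbb{F}_{2^{n}}$ precisely when
\[
f(x) + f(y) + f(z) + f(x + y + z) = 0
\]
has no solution $(x, y, z) \in \mathbb{F}_{2^{n}}^{3}$ with $x, y, z$ pairwise distinct, I would attach to $f_{d}(x) = x^{d}$ the polynomial
\[
\psi_{d}(x, y, z) = \frac{x^{d} + y^{d} + z^{d} + (x + y + z)^{d}}{(x + y)(y + z)(z + x)} \in \mathbb{F}_{2}[x, y, z],
\]
the three linear forms indeed dividing the numerator, as one checks by specializing $x = y$, $y = z$ and $z = x$. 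With this notation, $f_{d}$ is APN on $\mathbb{F}_{2^{n}}$ if and only if the surface $\psi_{d} = 0$ has no $\mathbb{F}_{2^{n}}$-point with pairwise distinct coordinates.

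One inclusion is classical: for $\gcd(i, n) = 1$ the Gold exponent $2^{i} + 1$ and the Kasami exponent $2^{2i} - 2^{i} + 1$ give APN functions on $\mathbb{F}_{2^{n}}$, hence on infinitely many fields, so the corresponding $f_{d}$ are $2$-exceptional. For the reverse inclusion I would use the following soft argument. Suppose $\psi_{d}$ has an absolutely irreducible factor $g$ over $\mathbb{F}_{2}$ whose zero surface $V(g) \subseteq \mathbb{A}^{3}$ is not contained in the union of the planes $\{x = y\}$, $\{y = z\}$, $\{z = x\}$. Then $V(g)$ is geometrically irreducible of dimension $2$, so by Lang--Weil $\# V(g)(\mathbb{F}_{2^{n}}) = 2^{2n} + O(2^{3n/2})$, while the locus of $V(g)$ where two coordinates coincide lies in a curve and so contributes only $O(2^{n})$ points; hence for all large $n$ there is an $\mathbb{F}_{2^{n}}$-point of $V(g)$ with pairwise distinct coordinates, so $f_{d}$ is not APN on $\mathbb{F}_{2^{n}}$ and $d$ is not exceptional. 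Therefore the theorem reduces to the claim that \emph{if $d$ is odd and is neither a Gold nor a Kasami number, then $\psi_{d}$ has an absolutely irreducible factor over $\mathbb{F}_{2}$}; after deleting from $\psi_{d}$ any repeated linear factors $x + y$, $y + z$, $z + x$ (points with two equal coordinates never give APN violations), such a factor is automatically off the diagonal.

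To prove this claim I would study the factorization of $\psi_{d}$ over $\overline{\mathbb{F}_{2}}$ Galois-theoretically: the Frobenius $x \mapsto x^{2}$ permutes the absolutely irreducible factors, the product over each Frobenius orbit is $\mathbb{F}_{2}$-rational, and ``$\psi_{d}$ has no $\mathbb{F}_{2}$-rational absolutely irreducible factor'' is equivalent to every orbit having size at least two. The geometric heart of the matter is a careful analysis of the projective closure of $\{\psi_{d} = 0\}$ --- especially of its intersection with the hyperplane at infinity and of its singular locus --- together with slicing by suitable hyperplanes $z = c$ to reduce to a plane curve, where one applies classical criteria for absolute irreducibility involving multiplicities of singular points, tangent cones, and B\'{e}zout's theorem. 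The argument is organized by the binary expansion of $d$: cases of low Hamming weight, where $d$ has only two or three binary ones, are treated by direct computation, and the general case by reductions that decrease the weight; in each instance one either exhibits a Frobenius-stable component --- for example one carrying a smooth $\mathbb{F}_{2}$-point --- or extracts from the digits of $d$ a structural obstruction to the factors being permuted without a fixed point. The only exponents for which the analysis produces no such factor turn out to be the Gold and Kasami numbers, completing the classification.

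The main obstacle is squarely this last step. The reduction to odd $d$ and the Lang--Weil half are routine; the difficulty is to show that $\psi_{d}$ acquires an absolutely irreducible $\mathbb{F}_{2}$-rational factor for \emph{every} $d$ outside the two known families, which requires a delicate, heavily case-dependent study of the degeneracies of $\psi_{d}$ at infinity, sensitive to the precise binary expansion of $d$. It is here that the earlier partial results on exceptional exponents must be absorbed and considerably extended before the classification can be closed.
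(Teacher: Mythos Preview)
The paper does not contain a proof of this theorem: it is quoted in the introduction as a known result, conjectured by Dillon and proved by Hernando and McGuire \cite{HM2011}, and is used only to motivate Conjecture~\ref{conj 1}. So there is no ``paper's own proof'' to compare against.

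That said, your outline is precisely the strategy of the cited reference. The reduction to odd $d$, the reformulation via the surface $\psi_d=0$, and the Lang--Weil argument showing that an absolutely irreducible $\mathbb{F}_2$-rational factor forces non-APN behaviour over all sufficiently large $\mathbb{F}_{2^n}$ are all standard and correct. You have also correctly identified where the real content lies: establishing that $\psi_d$ has such a factor whenever $d$ is neither a Gold nor a Kasami number. Your description of this step is honest but schematic --- phrases like ``one either exhibits a Frobenius-stable component \ldots\ or extracts from the digits of $d$ a structural obstruction'' accurately signal the shape of the Hernando--McGuire argument (building on Janwa--Wilson and others) without actually carrying it out. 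As a proof \emph{proposal} this is fine; as a proof it is incomplete exactly where the genuine difficulty is, and completing it would amount to reproducing a substantial portion of \cite{HM2011}.
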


In this paper, we will give all monomial GAPN functions of algebraic degree 
$p$ or $n (p-1) - 1$ (see Subsection \ref{minimum algebraic degree} and \ref{maximum algebraic degree}, respectively). 
Note that 
if $f \colon \mathbb{F}_{p^n} \to \mathbb{F}_{p^n}$ is a GAPN function, then 
we have $p \leq d^{\circ} (f) \leq n (p-1) - 1$ (see Proposition \ref{Prop: basic result}). 
In addition, we will show that when $p \geq 3$, 
any monomial function of algebraic degree $p$ is $p$-exceptional on 
some extension field of $\mathbb{F}_p$ (see Proposition \ref{main1}). 
Moreover for odd prime $p$, 
we will give a conjecture for the existences of 
GAPN functions $f$ on 
$\mathbb{F}_{p^n}$ with 
$p < d^{\circ} (f) < n (p-1) - 1$, 
and $p$-exceptional exponents 
(see Conjecture \ref{conj 1}). 

\section{Monomial GAPN functions}

In this section, we mainly assume that $p$ is an odd prime. 
Let $F = \mathbb{F}_{p^n}$ be a finite field of characteristic $p$.  
Let $f_d$ be a monomial function 
\begin{align*}
f_d \colon F \longrightarrow F, \ \ x \longmapsto x^d. 
\end{align*}
For any $x \in F$, we have that $x^{p^n} = x$, so we may assume that 
$d < p^n$. 
Then the exponent $d$ has the $p$-adic expansion 
$d = \sum_{s=0}^{n-1} d_s p^s$. 
Let $w_p (d)$ denote the sum of the coefficients 
$d = \sum_{s=0}^{n-1} d_s$, and we call it the $p$-weight of $d$. 
Clearly, we have $w_p (d) \leq n (p-1)$. 
For any monomial function $f_d \colon F \to F$, the algebraic degree $d^{\circ} (f_d)$ of $f_d$ coincides with the $p$-weight of $d$ (see \cite[Section 2]{GAPN1} for more details): 
\begin{align*}
d^{\circ} (f_d) = w_p (d) \leq n (p-1). 
\end{align*}

\begin{proposition} \label{Prop: basic result}
On the above notations, 
\begin{enumerate}
\item[(1)] if $d^{\circ} (f_d) < p$, then $f_d$ is not a GAPN function on $F$, and 
\item[(2)] if $d^{\circ} (f_d)$ is even, then $f_d$ is not a GAPN function on $F$. 
\end{enumerate}
In particular, for any monomial GAPN function $f_d$, we have 
$p \leq d^{\circ} (f_d) \leq n (p-1) - 1$. 
\end{proposition}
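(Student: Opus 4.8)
The plan is to analyze the operator $\tilde{D}_a f_d(x) = \sum_{i \in \mathbb{F}_p} (x+ia)^d$ directly via the binomial expansion. Writing $(x+ia)^d = \sum_{k=0}^{d} \binom{d}{k} x^{d-k} (ia)^k = \sum_{k=0}^{d} \binom{d}{k} i^k a^k x^{d-k}$ and summing over $i \in \mathbb{F}_p$, the key arithmetic fact is that $\sum_{i \in \mathbb{F}_p} i^k$ equals $-1$ in $\mathbb{F}_p$ when $k$ is a positive multiple of $p-1$, and equals $0$ otherwise (including $k=0$, since $\sum_{i\in\mathbb{F}_p} 1 = 0$). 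Hence
\begin{align*}
\tilde{D}_a f_d(x) = -\sum_{\substack{0 < k \le d \\ (p-1) \mid k}} \binom{d}{k} a^k x^{d-k}.
\end{align*}
This is a polynomial in $x$ whose degree is $d - (p-1)\lceil\,\cdot\,\rceil$; more precisely, if $k_0$ is the least positive multiple of $p-1$ with $\binom{d}{k_0} \not\equiv 0 \pmod p$, then $\tilde{D}_a f_d$ has degree exactly $d - k_0$ in $x$ (for suitable $a$, e.g. all $a \in F^\times$ since $a^{k_0}\ne 0$), so $\tilde{N}_f(a,b) \le d - k_0$ for all $b$.

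For part (1), I would show that if $w_p(d) = d^\circ(f_d) < p$ then $f_d$ cannot be GAPN. The idea: with small $p$-weight the expansion above collapses. Concretely, I expect to use Lucas' theorem on $\binom{d}{k} \bmod p$: writing $d = \sum d_s p^s$ and $k = \sum k_s p^s$, we have $\binom{d}{k} \equiv \prod_s \binom{d_s}{k_s} \pmod p$, which is nonzero iff $k_s \le d_s$ for all $s$, i.e. $k$ is "carry-free below $d$". I would argue that when $w_p(d) < p$ there is a digit-wise sub-sum of $d$ equal to $p-1$ (or show $\tilde D_a f_d$ is essentially linear/constant), forcing $\tilde D_a f_d$ to be either constant (so some fiber has size $p^n > p$) or of the form $c\,x^{d'}$ with $w_p(d')$ small enough that the map $x\mapsto x^{d'}$ is too-to-one beyond $p$ — in fact if $\tilde D_a f_d(x) = c x^{d'}$ is nonconstant then it is a GAPN-obstruction unless $\gcd(d', p^n-1)\le p$, and one checks the weight bound $w_p(d)<p$ is incompatible with that plus the requirement $(p-1)\mid k_0$. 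The cleanest route may be: $w_p(d)<p$ and $d<p^n$ force, after removing the factor $x^{k_0}$-worth of terms, that $\tilde D_a f_d$ is $\mathbb{F}_p$-affine in $x$, hence has all fibers of size $1$ or $p^n$; since it is nonzero-valued somewhere with a fiber of size $p^n > p$ (or is identically the zero map, again size $p^n$), GAPN fails.

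For part (2), suppose $d^\circ(f_d) = w_p(d)$ is even and $f_d$ is GAPN. Here I would combine a weight/carry count with the structure of $k_0$. Since $\tilde D_a f_d$ involves only exponents $d-k$ with $(p-1)\mid k$, and since $p-1$ is even, the exponents $d-k$ appearing all have $w_p(d-k) \equiv w_p(d) - w_p(k) \pmod{?}$ — more carefully, I'd track how $p$-weight changes under subtracting a multiple of $p-1$ using the identity $w_p(d) - w_p(d-k) \equiv k \pmod{p-1}$ (a standard digit-sum congruence), so $w_p(d-k) \equiv w_p(d) \pmod{p-1}$ for every $k$ that is a multiple of $p-1$. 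The parity obstruction should come from pairing $x$ with $-x$ or from a derivative/symmetry argument: since $\tilde D_a f_d(-x) = (-1)^{d} \tilde D_a f_d(x)$-type relations interact with evenness, I expect to show the fibers come in matched pairs forcing their sizes to be even, while GAPN with $p$ odd would at some point require an odd count, or — more likely — to show directly that $\binom{d}{k_0} \equiv 0$ for all candidate $k_0$ when $w_p(d)$ is even, making $\tilde D_a f_d \equiv 0$ identically, whence every fiber has size $p^n > p$. I believe the real content, and the main obstacle, is precisely this last point: proving that evenness of $w_p(d)$ forces $\binom{d}{k}\equiv 0 \pmod p$ for every positive multiple $k$ of $p-1$ with $k\le d$, equivalently that no carry-free $k$ (in the Lucas sense) below $d$ is a positive multiple of $p-1$ when the digit sum of $d$ is even. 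This is a purely combinatorial statement about base-$p$ digits, and nailing it down — handling the subtlety that $k$ ranges only up to $d$ and that digit-wise domination must be reconciled with $(p-1)\mid k$ — is where I'd spend most of the effort.

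Finally, the concluding clause $p \le d^\circ(f_d) \le n(p-1)-1$ follows immediately: the lower bound is the contrapositive of (1), and for the upper bound we already have $d^\circ(f_d) = w_p(d) \le n(p-1)$, while $w_p(d) = n(p-1)$ forces $d = p^n - 1$, whose $p$-weight $n(p-1)$ is even when... — rather, since $d = p^n-1$ gives $f_d \equiv$ the indicator-type map $x\mapsto x^{p^n-1}$ which is not GAPN, or simply because $n(p-1)$ being excluded follows from (2) when $n(p-1)$ is even and from a direct check when it is odd (then $d=p^n-1$ and $\tilde D_a f_d$ is computed explicitly to violate the bound). So the only missing pieces are (1) and (2), proved as above.
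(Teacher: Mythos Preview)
Your approach to part (2) has a genuine error. The combinatorial claim you settle on --- that when $w_p(d)$ is even, $\binom{d}{k}\equiv 0\pmod p$ for every positive multiple $k\le d$ of $p-1$ --- is false. For $p=3$ and $d=4$ (so $w_3(4)=2$ is even), take $k=d=4$: then $\binom{4}{4}=1\not\equiv 0$. In fact $k=d$ always survives whenever $(p-1)\mid d$, and for $p=3$ this happens precisely when $w_3(d)$ is even, so your proposed vanishing fails in every relevant case. Consequently $\tilde D_a f_d$ is \emph{not} identically zero in general; for $d=2$, $p=3$ one gets $\tilde D_a f_2(x)=-a^2$, a nonzero constant.

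The idea you mention and then abandon --- pairing $x$ with $-x$ --- is exactly what the paper does, and it works in one line once you notice the key point you are missing: since $p$ is odd, $d=\sum d_s p^s\equiv\sum d_s=w_p(d)\pmod 2$, so $w_p(d)$ even forces $d$ even. Then $f_d(-y)=f_d(y)$, hence $\tilde D_a f_d(-x)=\tilde D_a f_d(x)$, and any solution $x_0$ of $\tilde D_a f_d(x)=b$ yields the full set $(x_0+a\mathbb F_p)\cup(-x_0+a\mathbb F_p)$ of solutions; choosing $x_0\notin a\mathbb F_p$ gives $\tilde N_{f_d}(a,b)\ge 2p$. This also immediately handles the upper bound in the final sentence: because $p$ is odd, $n(p-1)$ is always even, so (2) rules out $d^\circ(f_d)=n(p-1)$ with no case analysis needed.

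For part (1) the paper simply cites an earlier result. Your binomial/Lucas route can be made to work for monomials, but your write-up does not actually carry it out (the sentence ``there is a digit-wise sub-sum of $d$ equal to $p-1$'' points in the wrong direction). The clean version is: if $\binom{d}{k}\not\equiv 0\pmod p$ then $k_s\le d_s$ for all $s$, so $w_p(k)\le w_p(d)<p$; combined with $(p-1)\mid k$ and $k\equiv w_p(k)\pmod{p-1}$ this forces $w_p(k)=p-1$, hence $w_p(d-k)=w_p(d)-(p-1)\le 0$ and so $k=d$. Thus $\tilde D_a f_d$ is a constant in $x$, and (for $n\ge 2$) some fiber has size $p^n>p$.
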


\begin{proof}
(1) is clear from Proposition 2.12 in \cite{GAPN1}. 
We prove (2). 
Assume that $d^{\circ} (f_d)$ is even. 
Then $d$ is even. 
Hence if the equation $\tilde{D}_a f_d (x) = b$ has a solution $x_0$, then 
any point in $\left( x_0 + a \mathbb{F}_p \right) \cup \left( - x_0 + a \mathbb{F}_p \right)$ 
is also a solution of the equation. 
Hence $\tilde{N}_{f_d} (a, b) \geq 2p$, and hence $f_d$ is not a GAPN function. 
\end{proof}

In the following, we will give all GAPN functions on $F = \mathbb{F}_{p^n}$ 
with algebraic degree $p$ or $n (p-1) - 1$.

\subsection{Monomial GAPN functions on $\mathbb{F}_{p^n}$ with minimum algebraic degree}
\label{minimum algebraic degree}

Let $f_d$ be a monomial function on $F = \mathbb{F}_{p^n}$ with $d^{\circ} (f_d) = p$. 
Then we have that 
\begin{align*} 
d = p^{i_1} + p^{i_2} + \cdots + p^{i_p} \ \ \mbox{with} \ \ i_1 \leq i_2 \leq \cdots \leq i_p. 
\end{align*}
Hence we have that 
\begin{align*}
d = p^{i_i} d', \ \ \mbox{where} \ \ d' = 1 + p^{i_2 - i_1} + \cdots + p^{i_p - i_1}, 
\end{align*}
and hence we obtain $f_d = f_{d'} \circ {\rm Fb}_{p^{i_1}}$, where ${\rm Fb}_{p^{i_1}}$ is the Frobenius  isomorphism ${\rm Fb}_{p^{i_1}} (x) = x^{p^{i_1}}$. 
In particular, $f_d$ and $f_{d'}$ are EA-equivalence (see \cite[Section 2]{GAPN1}), and hence 
$f_d$ is a GAPN function if and only if $f_{d'}$ is a GAPN function by Proposition 2.1 in \cite{GAPN1}. 
Therefore we may assume that $i_1 = 0$, that is, we may assume that 
\begin{align}\label{p-exceptional}
d = 1 + p^{i_2} + \cdots + p^{i_p} 
\ \ \mbox{with} \ \ 
0 \leq i_2 \leq \cdots \leq i_p \ \ \mbox{and} \ \ (i_2, \dots, i_p) \ne (0, \dots, 0). 
\end{align}
Then we can write 
\begin{align} \label{alpha}
d = \sum_{s=0}^{n-1} \alpha_s p^s \ \ \ 
(\alpha_s \in \mathbb{F}_p, \ \alpha_0 + \cdots + \alpha_{n-1} = p). 
\end{align}
We define the polynomial $D(X) \in \mathbb{F}_p [X]$ as follows: 
\begin{align} \label{polynomial D}
D(X) \coloneqq \sum_{s=0}^{n-1} \alpha _s X^s 
\left( = 1 + X^{i_2} + \cdots + X^{i_p} \right) \in \mathbb{F}_{p} [X]. 
\end{align}
On the above notations, we have the following criterion: 
\begin{theorem} \label{Thm: criterion}
$f_d$ is a GAPN function on $F$ 
if and only if 
$D (\beta) \ne 0$ 
for any $\beta \in \overline{\mathbb{F}_p} \setminus \{ 1\}$ such that $\beta^n = 1$, 
where 
$\overline{\mathbb{F}_p}$ is the algebraic closure of $\mathbb{F}_p$. 
\end{theorem}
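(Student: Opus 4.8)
The plan is to compute the map $\tilde{D}_a f_d$ explicitly and translate the condition $\tilde{N}_{f_d}(a,b) \le p$ into a statement about the polynomial $D(X)$. First I would observe that since $d^\circ(f_d) = w_p(d) = p$, the $p$-linearized structure kicks in: because $d = \sum_{s} \alpha_s p^s$ with $\sum_s \alpha_s = p$, the monomial $x^d$ expands, for $x, ia$ with $i \in \mathbb{F}_p$, via the Frobenius-additivity $(x+ia)^{p^s} = x^{p^s} + i^{p^s} a^{p^s} = x^{p^s} + i\, a^{p^s}$ (using $i^p = i$ in $\mathbb{F}_p$). Writing $x^d = \prod_{s} (x^{p^s})^{\alpha_s}$ and substituting $x \mapsto x + ia$, I would expand the product and sum over $i \in \mathbb{F}_p$. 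The key arithmetic fact is that $\sum_{i \in \mathbb{F}_p} i^k$ equals $-1$ when $(p-1) \mid k$ and $k > 0$, and $0$ otherwise (and $\sum_{i} i^0 = 0$ in characteristic $p$). Since the total degree in $i$ of each monomial appearing in the expansion of $\prod_s (x^{p^s} + i a^{p^s})^{\alpha_s}$ is at most $\sum_s \alpha_s = p$, the only surviving terms after summing over $i$ are those where the $i$-degree is exactly $p-1$ or exactly $p$.

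Next I would carry out this bookkeeping carefully. The term of $i$-degree $p$ is $i^p a^d = i\, a^d$, which sums to $-a^d \cdot 0 = 0$ — wait, more precisely $\sum_i i^p = \sum_i i = 0$ for $p$ odd, so that term vanishes. The terms of $i$-degree $p-1$ are obtained by choosing, in $\prod_s (x^{p^s} + i a^{p^s})^{\alpha_s}$, the factor $i\,a^{p^s}$ from all but one of the $p$ linear factors and the factor $x^{p^t}$ from the remaining one; collecting these gives something proportional to $\Big(\sum_{t} \alpha_t\, x^{p^t}\, a^{d - p^t}\Big) \cdot a^{-1}\cdot(\text{stuff})$, and after multiplying by $\sum_i i^{p-1} = -1$ one is left, up to a nonzero constant and a power of $a$, with a \emph{linearized polynomial} in $x$, namely $L_a(x) = \sum_{s=0}^{n-1} \alpha_s\, a^{p^s}\, x^{p^{\hat s}}$ for an appropriate index shift $\hat s$ — concretely $\tilde{D}_a f_d(x)$ is an $\mathbb{F}_p$-affine function of $x$ whose linear part is $c \cdot a^{\,d - p^t}\, x^{p^t}$ summed over $t$, i.e. $c\, a^{d}\sum_{t} \alpha_t\, (x/a)^{p^t}$ for suitable normalization. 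The upshot: $\tilde{D}_a f_d$ is an $\mathbb{F}_p$-linearized polynomial $L_a$ (possibly plus a constant), and $\tilde{N}_{f_d}(a,b)$ is either $0$ or $\#\ker L_a$. Hence $f_d$ is GAPN on $F$ iff $\#\ker L_a \le p$ for every $a \in F^\times$, i.e. iff the linearized polynomial $L_a(x) = \sum_s \alpha_s\, a^{p^s - p^t}\, x^{p^t}$ has at most $p$ roots in $F$ — equivalently, since it is $\mathbb{F}_p$-linear, that its associated $\mathbb{F}_p$-linear map on $F$ has rank $\ge n-1$.

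The final step is to identify when this rank condition fails. A linearized polynomial $\sum_t \gamma_t x^{p^t}$ of $p$-degree $< n$ over $\overline{\mathbb{F}_p}$ shares a common root with $x^{p^n} - x$ precisely according to the gcd of the associated $q$-polynomials; the dimension of the kernel inside $F = \mathbb{F}_{p^n}$ is governed by $\gcd$ in the (twisted) polynomial ring, but since here the coefficients $\alpha_s$ lie in $\mathbb{F}_p$, the symbolic polynomial $\sum_s \alpha_s X^s = D(X)$ is an \emph{ordinary} polynomial and the linearized polynomial $\sum_s \alpha_s x^{p^s}$ has $\gcd$ with $x^{p^n}-x$ corresponding to $\gcd(D(X), X^n - 1)$ in $\mathbb{F}_p[X]$. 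I would show that, after dividing out the $a$-dependence (which only scales coordinates and does not change the $F_p$-dimension of the solution space on $F$), $\dim_{\mathbb{F}_p}\ker(L_a|_F) = \deg \gcd(D(X), X^n-1)$, and that $X - 1$ always divides this gcd because $D(1) = \sum_s \alpha_s = p = 0$ in $\mathbb{F}_p$. Therefore the kernel always has dimension $\ge 1$ (the line $a\mathbb{F}_p$, reflecting that $x \mapsto x+ia$ fixes $\tilde{D}_a f_d$), and it has dimension exactly $1$ — i.e. $\#\ker = p$, the GAPN bound — if and only if $\gcd(D(X), X^n - 1) = X - 1$, which is exactly the condition that $D(\beta) \ne 0$ for every $\beta \in \overline{\mathbb{F}_p}\setminus\{1\}$ with $\beta^n = 1$.

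The main obstacle I anticipate is the combinatorial expansion in the first two steps: correctly tracking the index shifts and the power of $a$ so that $\tilde{D}_a f_d$ is seen to be (a constant times) the linearized polynomial attached to $D$, and in particular verifying that the constant multiplying it is a nonzero element of $\mathbb{F}_p$ so that no roots are spuriously gained or lost. A secondary subtlety is justifying that replacing $x$ by $x/a$ (or the corresponding coordinate scaling $x \mapsto a^{p^t} x$) is a bijection of $F$ and hence preserves the count — this is routine but must be stated — and that the $\gcd$ computation relating the linearized polynomial's $F$-kernel to $\deg\gcd(D(X), X^n-1)$ is valid precisely because all the coefficients $\alpha_s$ are in the prime field, so the twisted-polynomial gcd collapses to the ordinary one.
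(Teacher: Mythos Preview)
Your argument is sound and reaches the same conclusion, but by a different route from the paper. The paper first cites lemmas from \cite{GAPN1} to reduce to $a=1$ and to the statement that $f_d$ is GAPN iff $\ker\varphi_d=\mathbb{F}_p$, where $\varphi_d(x)=\sum_s\alpha_s x^{p^s}$; it then invokes an external image--size result (Theorem~2.5 of \cite{CCMPT2013}) identifying $\#\,\mathrm{Im}\,\varphi_d$ with $p^{\mathrm{rk}(M_d)}$ for the circulant matrix $M_d=(\alpha_{\,j-i\bmod n})_{i,j}$, and finishes by computing the eigenvalues of $M_d$ as $D(\beta)$ over the $n$-th roots of unity. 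You instead expand $\tilde D_af_d$ for general $a$ by hand (and this does come out cleanly: $\tilde D_af_d(x)=-a^{d}\,\varphi_d(x/a)$, purely linear with no affine shift), and in place of the circulant you view $\varphi_d=D(\phi)$ with $\phi$ the Frobenius, use that $\mathbb{F}_{p^n}$ is a cyclic $\mathbb{F}_p[\phi]$-module with minimal polynomial $X^n-1$ (normal basis theorem) to get $\dim_{\mathbb{F}_p}\ker\varphi_d=\deg\gcd\bigl(D(X),X^n-1\bigr)$, and conclude. Your route is more self-contained, avoiding the external citation, and the two are really the same linear algebra in disguise: $M_d$ is exactly multiplication by $D(X)$ on $\mathbb{F}_p[X]/(X^n-1)$. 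One caveat that your final step and the paper's eigenvalue count both gloss over: the equivalence ``$\gcd(D,X^n-1)=X-1\iff D(\beta)\ne0$ for all $n$-th roots $\beta\ne1$'' uses separability of $X^n-1$, i.e.\ $p\nmid n$; when $p\mid n$ the $\gcd$ formulation is the correct criterion (for instance $p=n=3$, $d=13$ gives $D(X)=(X-1)^2$ and $\dim\ker\varphi_d=2$, although the root condition is vacuously satisfied).
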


\begin{proof}
By the proof of Lemme 3.3 in \cite{GAPN1}, we obtain that 
\begin{align*}
\varphi_d (x) \coloneqq \tilde{D}_1 f_d (x) = x + x^{p^{i_2}} + \cdots + x^{p^{i_{p}}} \ \ (x \in F), 
\end{align*}
and hence $\varphi_d \colon F \to F$ is $\mathbb{F}_p$-linear. 
Thus $\dim_{\mathbb{F}_p} {\rm Ker} \left( \varphi_d \right) 
= n - \dim_{\mathbb{F}_p} {\rm Im} \left( \varphi_d \right) $. 
Then by Lemma 3.3, (iii) in \cite{GAPN1}, 
$f_d$ is a GAPN function on $F$ 
if and only if 
\begin{align*} 
 {\rm Ker} \left( \varphi_d \right) =  \Set{ x \in F | x + x^{p^{i_2}} + \cdots + x^{p^{i_{p}}} = 0 } 
= \mathbb{F}_p .
\end{align*} 
This is equivalent to that 
\begin{align} \label{dim im}
\dim_{\mathbb{F}_p} {\rm Im} \left( \varphi_d \right) = n  - 1, \ 
\mbox{that is}, \ \# {\rm Im} \left( \varphi _d \right) = p^{n-1} . 
\end{align}
On the other hand, by Theorem 2.5 in \cite{CCMPT2013}, we have that 
$\# {\rm Im} \left( \varphi _d \right) = p^{{\rm rk} (M_d)}$. Here $M_d$ is the 
$n \times n $ matrix defined by 
\begin{align*}
M_d = \left[
\begin{array}{cccc}
\alpha_0 & \alpha_{n-1} & \cdots & \alpha_1 \\
\alpha_1 & \alpha_0 & \cdots & \alpha_2 \\
\vdots & \vdots & \ddots & \vdots \\ 
\alpha_{n-1} & \alpha_{n-2} & \cdots& \alpha_0 \\
\end{array}
\right], 
\end{align*}
where $\alpha _0$, $\dots$, $\alpha_{n-1}$ are defined by (\ref{alpha}). 
Hence 
(\ref{dim im}) is equivalent to that ${\rm rk} (M_d) = n - 1$. 
Let $I_n$ be the identity matrix of size $n$. 
Then we have that 
\begin{align*}
\det \left( \mu I_n - M_d \right) 
&= \det \left[
\begin{array}{cccc}
\mu - \alpha_0 & - \alpha_{n-1} & \cdots & - \alpha_1 \\
-\alpha_1 & \mu - \alpha_0 & \cdots & -\alpha_2 \\
\vdots & \vdots & \ddots & \vdots \\ 
-\alpha_{n-1} & -\alpha_{n-2} & \cdots& \mu - \alpha_0 \\
\end{array}
\right] \\ 
&= \prod_{\beta^n=1} 
\left( \mu - \alpha_0 - \alpha_1 \beta - \dots - \alpha_{n-1} \beta ^{n-1} \right)
= \mu \prod_{\beta^n=1, \beta \ne 1}  \left( \mu - D (\beta) \right), 
\end{align*}
and hence 
the eigenvalues of $M_d$ are $0$ and 
$D (\beta)$ ($\beta \in \overline{\mathbb{F}_p} \setminus \{ 1\}$, $\beta^n = 1$). 
Therefore ${\rm rk} (M_d) = n - 1$ if and only if 
$D (\beta) \ne 0$ 
for any $\beta \in \overline{\mathbb{F}_p} \setminus \{ 1\}$ with $\beta^n = 1$. 
\end{proof}

Generalized Gold functions 
$f \colon \mathbb{F}_{p^n} \to \mathbb{F}_{p^n}$, $f(x) = x^{p^i + p-1}$ are GAPN functions if $\gcd (i, n) =1$. In particular, they are $p$-exceptional clearly. 
More generally, we obtain the following Proposition: 


\begin{proposition} \label{main1}
Any exponent $d$ given by (\ref{p-exceptional}) is a $p$-exceptional exponent. 
\end{proposition}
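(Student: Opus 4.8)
The plan is to reduce the claim, via Theorem~\ref{Thm: criterion}, to a statement about the nonvanishing of the associated polynomial $D(X)$ at roots of unity, uniformly across infinitely many extension fields. Recall that $d$ in \eqref{p-exceptional} has a fixed $p$-adic expansion $d = 1 + p^{i_2} + \cdots + p^{i_p}$, and this expression makes sense as an exponent over $\mathbb{F}_{p^m}$ for every $m$ larger than $\max_j i_j$; the polynomial attached to $f_d$ over $\mathbb{F}_{p^m}$ is the \emph{same} polynomial $D(X) = 1 + X^{i_2} + \cdots + X^{i_p} \in \mathbb{F}_p[X]$, only the relevant set of roots of unity changes (from $n$-th roots to $m$-th roots). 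So by the criterion, $f_d$ is a GAPN function on $\mathbb{F}_{p^m}$ if and only if $D(\beta) \ne 0$ for every $\beta \in \overline{\mathbb{F}_p}\setminus\{1\}$ with $\beta^m = 1$.

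First I would observe that $D(X)$ has only finitely many roots in $\overline{\mathbb{F}_p}$, say the set $Z \subseteq \overline{\mathbb{F}_p}$ of all roots of $D$. Every element of $Z$ that is a root of unity has some finite multiplicative order; let $L$ be the least common multiple of the orders of those elements of $Z \setminus \{1\}$ which happen to be roots of unity (if there are none, the conclusion is immediate and even stronger). Then for any $m$ with $\gcd(m, L) = 1$ — equivalently, any $m$ sharing no prime factor with $L$ — no primitive $d$-th root of unity with $d \mid L$, $d > 1$, can satisfy $\beta^m = 1$, so the only possible common element of $Z$ and $\mu_m$ (the $m$-th roots of unity) is $\beta = 1$. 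Hence $D(\beta) \ne 0$ for all $\beta \in \mu_m \setminus \{1\}$, and $f_d$ is GAPN on $\mathbb{F}_{p^m}$. Since there are infinitely many such $m$ (for instance all primes not dividing $pL$), the exponent $d$ is $p$-exceptional by definition.

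There is one subtlety to address carefully: whether $1$ itself is a root of $D$, i.e.\ whether $D(1) = 0$. By construction $D(1) = \alpha_0 + \cdots + \alpha_{n-1} = p \equiv 0$ in $\mathbb{F}_p$, so $1$ \emph{is} always a root of $D$; this is exactly why the criterion excludes $\beta = 1$, and it causes no trouble. The genuine point is that among the \emph{other} roots of $D$, only those that are roots of unity are relevant (a root of $D$ in $\overline{\mathbb{F}_p}$ that is not a root of unity, i.e.\ is transcendental-looking — though over $\overline{\mathbb{F}_p}$ every element is a root of unity, being algebraic over a finite field!). In fact every nonzero element of $\overline{\mathbb{F}_p}$ is a root of unity, so more precisely: let $Z_0 = \{\beta \in Z : \beta \ne 0, \beta \ne 1\}$, a finite set, each element of finite multiplicative order; set $L = \operatorname{lcm}$ of these orders (take $L = 1$ if $Z_0 = \emptyset$). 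Then for every $m$ coprime to $L$, $\mu_m \cap Z_0 = \emptyset$, giving the result.

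The main obstacle is essentially bookkeeping rather than depth: one must be sure that the criterion of Theorem~\ref{Thm: criterion} genuinely transfers verbatim to every extension field $\mathbb{F}_{p^m}$ with the \emph{same} polynomial $D$, which requires checking that the construction of $D$ from the $p$-adic digits of $d$ in \eqref{alpha}–\eqref{polynomial D} depends only on $d$ and not on $n$ (true, once $m > \max_j i_j$ so that no digit ``wraps around'' the exponent $p^m - 1$), and that the matrix-to-eigenvalue argument in the proof of the criterion is insensitive to which field of characteristic $p$ we work over. Once that is in place, the argument is a short density observation about which $m$-th roots of unity can hit the fixed finite root set of $D$. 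I would also remark that this recovers the $p$-exceptionality of the generalized Gold functions, where $D(X) = 1 + X^i + X \cdot X^{\,i-1}$... more simply $D(X) = 1 + X + X^i$ type expressions, whose non-$1$ roots have order dividing a fixed integer.
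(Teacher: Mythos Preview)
Your argument is correct and follows essentially the same route as the paper: reduce via Theorem~\ref{Thm: criterion} to the observation that the finitely many roots of the fixed polynomial $D(X)$ in $\overline{\mathbb{F}_p}\setminus\{1\}$ have bounded multiplicative orders, and then pick infinitely many $m$ coprime to those orders. The paper packages this as a two-part lemma (first exhibiting one good $n$, then extending to infinitely many via multiplication by suitable primes $q$), whereas you go directly to the infinite family by taking $m$ coprime to the lcm $L$; the content is the same, and your version is if anything slightly more streamlined, though the digression about ``transcendental-looking'' roots is unnecessary since, as you immediately note, every nonzero element of $\overline{\mathbb{F}_p}$ has finite multiplicative order.
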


\begin{proof}
It follows immediately from the following Lemma \ref{key lemma}. 
\end{proof}

\begin{lemma} \label{key lemma}
\begin{enumerate}
\item[(i)] 
For any exponent $d$ given by (\ref{p-exceptional}), 
there exists $n \in \mathbb{N}$ with $d < p^n$ such that 
$f_d$ is a monomial GAPN function of algebraic degree $p$ on $\mathbb{F}_{p^n}$. 
\item[(ii)] 
Any monomial GAPN function of algebraic degree $p$ on $\mathbb{F}_{p^n}$ for some $n$ 
is $p$-exceptional. 
\end{enumerate}
\end{lemma}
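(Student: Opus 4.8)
The plan is to reduce both parts to Theorem~\ref{Thm: criterion}, which characterizes when $f_d$ is GAPN on $\mathbb{F}_{p^n}$ in terms of the roots of the fixed polynomial $D$. The first step is to observe that $D$ has no root equal to $0$: its constant term $\alpha_0$ counts the indices $j$ with $i_j=0$, and $1\le\alpha_0\le p-1$ because $i_1=0$ while $(i_2,\dots,i_p)\ne(0,\dots,0)$, so $D(0)\ne0$ in $\mathbb{F}_p$. Hence every root of $D$ in $\overline{\mathbb{F}_p}$ lies in $\overline{\mathbb{F}_p}^{\times}$, and is therefore a root of unity of finite multiplicative order prime to $p$. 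Let $R$ be the finite set of multiplicative orders of those roots of $D$ that differ from $1$; every $r\in R$ satisfies $r\ge2$ and $p\nmid r$. Writing $n=p^a m$ with $p\nmid m$, one has $x^n=1\iff x^m=1$ in $\overline{\mathbb{F}_p}$, so the condition in Theorem~\ref{Thm: criterion} becomes: $D$ has no root among the $m$-th roots of unity other than $1$, equivalently no $r\in R$ divides $m$, equivalently (since each $r\in R$ is prime to $p$) no $r\in R$ divides $n$. Thus, for every $n$ with $d<p^n$ (i.e.\ $n>i_p$, in which case $w_p(d)=p$, so $f_d$ has algebraic degree $p$), $f_d$ is a monomial GAPN function of algebraic degree $p$ on $\mathbb{F}_{p^n}$ if and only if no element of $R$ divides $n$.

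Granting this reformulation, part (i) is immediate: the positive integers divisible by some element of $R$ form a union of finitely many residue classes of moduli $\ge2$, hence miss infinitely many integers, so one may choose $n>i_p$ not divisible by any $r\in R$ (for instance a prime exceeding $\max(R\cup\{i_p\})$, or any $n>i_p$ if $R=\varnothing$), and then $f_d$ is a monomial GAPN function of algebraic degree $p$ on $\mathbb{F}_{p^n}$.

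For part (ii), let $f_d$ be a monomial GAPN function of algebraic degree $p$ on $\mathbb{F}_{p^n}$. Composing with a suitable Frobenius map replaces $f_d$ by an EA-equivalent monomial function on $\mathbb{F}_{p^n}$ and on all its extensions, changing neither the GAPN property, nor the algebraic degree, nor the ground field; so we may assume $d$ has the form (\ref{p-exceptional}), and then $i_p<n$. By the reformulation, no $r\in R$ divides $n$. For any $k$ coprime to $\prod_{r\in R}r$ (the empty product being $1$), and any $r\in R$, every prime $q\mid r$ satisfies $q\ne p$ and $q\nmid k$, so $v_q(nk)=v_q(n)$; since $r\nmid n$ some such $q$ has $v_q(r)>v_q(n)=v_q(nk)$, whence $r\nmid nk$. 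The reformulation, applied with $n$ replaced by $nk>i_p$, shows $f_d$ is a GAPN function on the extension field $\mathbb{F}_{p^{nk}}$ of $\mathbb{F}_{p^n}$; as there are infinitely many admissible $k$ (e.g.\ all primes not dividing $\prod_{r\in R}r$), $f_d$ is $p$-exceptional, and in particular $d$ is a $p$-exceptional exponent, which yields Proposition~\ref{main1}.

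There is no deep difficulty here: once Theorem~\ref{Thm: criterion} is in hand the argument is essentially bookkeeping. The two points that need care are the verification that $D(0)\ne0$ --- without it the roots of $D$ need not be roots of unity and $R$ would be ill-defined --- and the elementary divisibility step in part (ii) that upgrades ``GAPN on one $\mathbb{F}_{p^n}$'' to ``GAPN on infinitely many extensions of it'', which is exactly where one uses that $R$ consists of integers $\ge 2$ prime to $p$.
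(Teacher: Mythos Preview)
Your proof is correct and follows essentially the same approach as the paper: both reduce to Theorem~\ref{Thm: criterion}, collect the multiplicative orders of the nontrivial roots of $D$, and then produce (infinitely many) values of $n$ avoiding divisibility by these orders. You add a few points of care the paper leaves implicit --- the verification that $D(0)\ne 0$, the fact that the orders in $R$ are prime to $p$, and the explicit Frobenius reduction at the start of (ii) --- but these are refinements of the same argument rather than a different route.
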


\begin{proof}
We first prove (i). 
Let $d$ be an exponent given by (\ref{p-exceptional}). 
By Theorem \ref{Thm: criterion}, it is sufficient to show that 
there exists $n \in \mathbb{N}$ with $d < p^n$ such that 
\begin{align} \label{want (i)}
\Set{ \beta \in \overline{\mathbb{F}_{p}} | D(\beta) = 0 } 
\cap \Set{ \gamma \in \overline{\mathbb{F}_{p}} | \gamma ^n = 1} = \{ 1 \}, 
\end{align}
where $D(X) \in \mathbb{F}_{p} [X]$ is defined by (\ref{polynomial D}). 
Note that the polynomial $D(X)$ depends only on exponent $d$. Then the set 
$\Set{ \beta \in \overline{\mathbb{F}_{p}} | D(\beta) = 0 }$ is a finite set. 
Let $\beta_1$, $\dots$, $\beta_m \in 
\Set{ \beta \in \overline{\mathbb{F}_{p}} | D(\beta) = 0 } \setminus \{ 1 \}$ 
be all elements which have finite orders and let 
\begin{align*}
n_j \coloneqq \min \Set{ N \in \mathbb{N} | \beta_j^N = 1 } (> 1). 
\end{align*}
Then there exist $n \in \mathbb{N}$ with $d < p^n$ such that 
$
\gcd (n, n_j) = 1$ ($j = 1$, $\dots$, $m$). 
Then we have $\beta_j \not \in \Set{ \gamma \in \overline{\mathbb{F}_{p}} | \gamma ^n = 1}$ 
($j=1$, $\dots$, $m$) and hence we obtain (\ref{want (i)}). 

Next we prove (ii). 
Let $f_d$ be a monomial GAPN function of algebraic degree $p$ on $\mathbb{F}_{p^n}$ for some $n$. By theorem \ref{Thm: criterion}, we have that 
\begin{align} \label{intersection}
\Set{ \beta \in \overline{\mathbb{F}_{p}} | D(\beta) = 0 } 
\cap \Set{ \gamma \in \overline{\mathbb{F}_{p}} | \gamma ^n = 1} = \{ 1 \}. 
\end{align}
Similarly to above, let $\beta_1$, $\dots$, $\beta_m \in 
\Set{ \beta \in \overline{\mathbb{F}_{p}} | D(\beta) = 0 } \setminus \{ 1 \}$ 
be all elements which have finite orders and let $n_j \coloneqq \min \Set{ N \in \mathbb{N} | \beta_j^N = 1 }$ ($> 1$). 
By (\ref{intersection}), 
we obtain that $\beta_j \not \in \Set{ \gamma \in \overline{\mathbb{F}_{p}} | \gamma ^n = 1}$, 
that is, $n$ is not divisible by 
$n_j$ for each $j=1$, $\dots$, $m$. 
Then for any prime $q$ such that $\gcd (n_j , q) = 1$ ($j=1$, $\dots$, $m$), 
the number $q n$ is not divisible by $n_j$ ($j=1$, $\dots$, $m$), that is, 
$
\beta_j \not \in \Set{ \gamma \in \overline{\mathbb{F}_{p}} | \gamma ^{qn} = 1}
$ ($j=1$, $\dots$, $m$). 
Hence we get 
\begin{align*}
\Set{ \beta \in \overline{\mathbb{F}_{p}} | D(\beta) = 0 } 
\cap \Set{ \gamma \in \overline{\mathbb{F}_{p}} | \gamma ^{qn} = 1} = \{ 1 \}, 
\end{align*}
and hence $f_d$ is also GAPN function on $\mathbb{F}_{p^{qn}}$, which is an extension field of $\mathbb{F}_{p^n}$. 
Since there exist infinitely many such prime numbers, $f_d$ is $p$-exceptional. 
\end{proof}

%

\subsubsection{Example: a generalization of Welch functions}
\label{generalized Welch}

If $n$ is odd, then the function defined by 
\begin{align*}
f (x) = x^{2^t + 3},  \ \ \  t = \frac{n-1}{2} 
\end{align*}
is APN on $\mathbb{F}_{2^n}$ (see \cite{Dobbertin99Welch}). 
Such functions are called the Welch functions 
(see Table \ref{monomial APN}). 
Here we construct a generalization of Welch functions. 

\begin{proposition}
Let 
\begin{align*}
d = p^t + p + 1, \ \ \ t = 
\left\{ 
\begin{array}{cl}
\frac{n-1}{2} & (\mbox{$n$ is odd}),  \\
\frac{n}{2} & (\mbox{$n$ is even}).   \\
\end{array}
\right.
\end{align*}
Then $f_d$ is a GAPN function of $\mathbb{F}_{p^n}$ 
if and only if 
$p = 2$ and $n$ is odd, or $p=3$. 
\end{proposition}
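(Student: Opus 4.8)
The plan is to split the problem according to the size of $p$, using the one uniform fact that $d = p^{t} + p + 1 = 1 + p + p^{t}$ has $p$-weight $3$ (still $3$ when $t=1$, where $d = 2p+1$), so that $d^{\circ}(f_d) = 3$ for all admissible $p$, $n$, $t$. (Throughout I assume $n \geq 2$, the case $n=1$ being degenerate.) If $p \geq 5$, then $d^{\circ}(f_d) = 3 < p$, and Proposition~\ref{Prop: basic result}(1) immediately gives that $f_d$ is not a GAPN function; nothing further is needed in this regime.

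If $p = 3$, then $d^{\circ}(f_d) = 3 = p$, so $f_d$ falls under Subsection~\ref{minimum algebraic degree}: after the normalization~(\ref{p-exceptional}) the associated polynomial is $D(X) = 1 + X + X^{t}$ (equal to $1 + 2X$ when $t = 1$), and by Theorem~\ref{Thm: criterion} it suffices to prove $D(\beta) \neq 0$ for every $\beta \in \overline{\mathbb{F}_3}\setminus\{1\}$ with $\beta^{n} = 1$. I would substitute $\gamma \coloneqq \beta^{t}$ and exploit the relation between $t$ and $n$. When $n = 2t$ one gets $\gamma^{2} = \beta^{n} = 1$, so $\gamma = \pm 1$ and $D(\beta)$ equals $2 + \beta$ or $\beta$, which is nonzero for $\beta \notin \{0,1\}$. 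When $n = 2t+1$ one gets $\gamma^{2} = \beta^{-1}$, so writing $\beta = \gamma^{-2}$ and clearing denominators turns $D(\beta) = 0$ into $\gamma^{3} + \gamma^{2} + 1 = 0$, i.e.\ $(\gamma - 1)(\gamma^{2} + 2\gamma + 2) = 0$; the factor $\gamma = 1$ forces $\beta = 1$, whereas a root of $\gamma^{2} + 2\gamma + 2$ lies in $\mathbb{F}_9$ and (by a one-line computation: $\gamma^{2} = \gamma + 1$, hence $\gamma^{4} = -1$) has multiplicative order $8$, so $\beta = \gamma^{-2}$ has order $4$, which cannot divide the odd integer $n$. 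Hence no forbidden $\beta$ occurs and $f_d$ is GAPN for all $n$, matching the ``$p = 3$'' clause.

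If $p = 2$, then GAPN coincides with APN and $d = 2^{t} + 3$. For $n = 2t+1$ odd this is exactly the Welch exponent, so $f_d$ is APN by \cite{Dobbertin99Welch}. For $n = 2t$ even I would show $f_d$ is \emph{not} APN by restricting to the subfield $\mathbb{F}_{2^{t}}$: there $x^{2^{t}} = x$, so $f_d(x) = x^{2^{t}} x^{3} = x^{4}$, whence $\tilde{D}_{1} f_d(x) = (x+1)^{4} + x^{4} = 1$ for every $x \in \mathbb{F}_{2^{t}}$; therefore $\tilde{N}_{f_d}(1,1) \geq 2^{t}$, which exceeds $2$ once $n \geq 4$, while for $n = 2$ one observes directly that $x^{5} = x^{2}$ is the Frobenius map, which is not APN. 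So $f_d$ fails to be GAPN for even $n$, matching the ``$n$ odd'' restriction.

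The step I expect to be the real obstacle is the $p = 3$, $n$ odd sub-case: one has to determine precisely which roots-of-unity zeros $D(X) = 1 + X + X^{t}$ possesses, and the crux is that the ``extra'' zeros coming from $\gamma^{2} + 2\gamma + 2 = 0$ yield a $\beta$ of multiplicative order exactly $4$ -- it is this order-$4$ obstruction, incompatible with $n$ being odd, that makes $p = 3$ work uniformly in $n$. The remaining regimes are essentially bookkeeping: $p \geq 5$ is a $p$-weight count through Proposition~\ref{Prop: basic result}, and $p = 2$ is Dobbertin's theorem together with the elementary subfield computation above; one should also record the degenerate instances ($n = 1$ in general, and $n = 2$ when $p = 2$) separately.
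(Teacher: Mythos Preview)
Your proof is correct and follows the same blueprint as the paper: rule out $p \geq 5$ via the algebraic-degree bound of Proposition~\ref{Prop: basic result}, handle $p = 3$ through Theorem~\ref{Thm: criterion}, and defer $p = 2$ to Dobbertin. The paper's $p=3$ computation differs only in mechanics---it substitutes $\beta^{t} = -1-\beta$ directly, deduces $\beta^{2} = -1$ in the odd-$n$ case, and then splits on $t \bmod 4$ rather than introducing your auxiliary $\gamma$ and its order-$8$ argument---and it is terser than you on the even-$n$, $p=2$ case, but the substance is the same.
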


\begin{proof}
When $p=2$, 
the function $f_d$ is the Welch function 
if and only if $n$ is odd. 
Since $d^{\circ} (f_d) = 3$, 
by Proposition \ref{Prop: basic result}, (ii), the function $f_d$ is not a GAPN function on 
$\mathbb{F}_{p^n}$ when $p \geq 5$. 
Let $p=3$. We prove that $f_d$ is a GAPN function on $\mathbb{F}_{3^n}$. 
Since $d^{\circ} (f_d) = 3$, 
by Theorem \ref{Thm: criterion}, it is sufficient to show that
\begin{align} \label{GWelch}
D (\beta) = 1 + \beta + \beta^t \ne 0 \ \ 
\mbox{for any $\beta \in \overline{\mathbb{F}_p} \setminus \{ 1 \}$ with $\beta^n=1$}. 
\end{align}
Assume that $D (\beta) = 0$ for some $\beta \in \overline{\mathbb{F}_p} \setminus \{ 1 \}$ with $\beta^n=1$. 
If $n$ is even, then we have 
\begin{align*}
1 = \beta^n = \beta^{2t} = (-1-\beta)^2 = 
1 - \beta + \beta^2, \ \ 
\mbox{that, is, } \ \ \beta (\beta - 1) = 0, 
\end{align*}
which is absurd. 
If $n$ is odd, 
then we have 
\begin{align*}
1 = \beta^n = \beta^{2 t + 1} = \beta \left( -1 - \beta \right)^2 
= \beta - \beta ^2 + \beta ^3 , \ \ \mbox{that, is}, \ \ 
\beta^2 + 1 = \beta (\beta^2 + 1). 
\end{align*}
Since $\beta \ne 1$, we get $\beta^2 = -1$. Hence we have 
\begin{align*}
0 = 1+ \beta + \beta^t 
= \left\{ 
\begin{array}{c l}
\beta-1 & (t \equiv 0 \mod 4), \\
-\beta+1 & (t \equiv 1 \mod 4), \\
\beta & (t \equiv 2 \mod 4), \\
1 & (t \equiv 3 \mod 4). \\
\end{array}
\right.
\end{align*}
In any case, they are contradictions. 
Therefore we obtain (\ref{GWelch}). 
\end{proof}

\subsection{Monomial GAPN functions on $\mathbb{F}_{p^n}$ with maximum algebraic degree}
\label{maximum algebraic degree}


For odd prime $p$, the inverse permutation 
\begin{align*}
f_{p^n-2} \colon F \longrightarrow F, \ \ x \longmapsto x^{p^n-2} 
\end{align*}
is a GAPN function (see \cite[Section 3]{GAPN1}). 
Note that when $p=2$ the inverse function is APN if and only if $n$ is odd. 
Then $p^n-2$ has the $p$-adic expansion 
\begin{align*}
p^n - 2 = (p-1) \left( 1 + p + \cdots + p^{n-1} \right)- 1 
= (p-2)  + (p-1) \left( p + \cdots + p^{n-1} \right). 
\end{align*}
Hence $d^{\circ} (f_{p^n-2}) = n (p-1) - 1$. 
More generally, we have the following proposition: 

\begin{proposition}
Any monomial function on $F = \mathbb{F}_{p^n}$ with algebraic degree $n (p-1) - 1$ is 
EA-equivalent to the inverse permutation. 
In particular, 
it is a GAPN function on $F$. 
\end{proposition}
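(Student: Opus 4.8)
The plan is to first determine exactly which exponents $d$ (with $d < p^n$, as we may assume) satisfy $w_p(d) = n(p-1)-1$. Since each $p$-adic digit $d_s$ lies in $\{0,\dots,p-1\}$, the maximal $p$-weight $n(p-1)$ forces every digit to equal $p-1$, so to lower the weight by exactly one, exactly one digit must be dropped to $p-2$. Hence the monomials in question are precisely $f_d$ with $d = (p^n-1) - p^j$ for some $j \in \{0,1,\dots,n-1\}$, and $j=0$ recovers the inverse exponent $p^n-2$. It then remains to prove that each such $f_d$ is EA-equivalent to the inverse permutation $f_{p^n-2}$.

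The main step is a cyclotomic-coset identity. Multiplying $d = p^n - 1 - p^j$ by $p^{n-j}$ and reducing modulo $p^n-1$ (using $p^n \equiv 1$) gives $p^{n-j}d = p^{2n-j} - p^{n-j} - p^n \equiv p^{n-j} - p^{n-j} - 1 \equiv p^n - 2 \pmod{p^n-1}$. Consequently $\bigl(f_d(x)\bigr)^{p^{n-j}} = x^{p^{n-j}d} = x^{p^n-2} = f_{p^n-2}(x)$ for every $x \in F^\times$, and both sides vanish at $x=0$ since $d \ge 1$; thus $f_{p^n-2} = {\rm Fb}_{p^{n-j}} \circ f_d$ as functions on $F$, where ${\rm Fb}_{p^{n-j}}(x) = x^{p^{n-j}}$.

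Since ${\rm Fb}_{p^{n-j}}$ is an $\mathbb{F}_p$-linear bijection of $F$, the identity above exhibits $f_d$ and $f_{p^n-2}$ as EA-equivalent (take the outer affine permutation to be ${\rm Fb}_{p^{n-j}}$, the inner one the identity, and the added affine map zero). Combining this with the facts that the inverse permutation $f_{p^n-2}$ is a GAPN function on $F$ for odd $p$ (by \cite[Section 3]{GAPN1}) and that the GAPN property is invariant under EA-equivalence (Proposition 2.1 in \cite{GAPN1}) then yields the ``in particular'' clause.

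The step I expect to be most delicate is not any single computation but getting the flavor of the equivalence right: here the Frobenius map is composed \emph{outside} $f_d$, in contrast to Subsection \ref{minimum algebraic degree}, where $p \mid d$ allowed one to factor Frobenius \emph{inside}; indeed $\gcd(d,p)=1$ here (as $d \equiv -1$ or $-2 \bmod p$), so no such inside factorization is available and one genuinely needs the coset relation $p^{n-j}d \equiv p^n-2 \pmod{p^n-1}$. One should also take care that the identity $\bigl(x^d\bigr)^{p^{n-j}} = x^{p^n-2}$ holds at every point of $F$, including $x=0$. Once these points are settled, the argument reduces to the short modular computation above.
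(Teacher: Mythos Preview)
Your proof is correct and follows essentially the same route as the paper: identify the relevant exponents as $d = p^n - p^j - 1$ for $j \in \{0,\dots,n-1\}$, then compose with the Frobenius $x \mapsto x^{p^{n-j}}$ and use $p^{n-j}d \equiv p^n-2 \pmod{p^n-1}$ to land on the inverse permutation. One small remark: your discussion of ``inside versus outside'' composition is unnecessary, since for monomials $(x^a)^b = (x^b)^a$; indeed the paper itself composes Frobenius on the \emph{inside}, writing $\bigl(f_{(j)} \circ {\rm Fb}_{(j)}\bigr)(x) = \bigl(x^{p^{n-j}}\bigr)^{p^n-p^j-1} = x^{-1}$, so an inside composition is perfectly available here and yields the identical computation.
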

\begin{proof}
Since $w_p (d) = n (p-1) - 1$, the exponent $d$ is given by 
\begin{align*}
(p-1) (1 + p + \cdots + p^{n-1}) - p^j = p^n - p^j - 1, 
\end{align*}
for some $j \in \left\{ 0, 1, \dots, n-1 \right\}$. 
Hence any monomial function on $F$ with algebraic degree $n (p-1) - 1$ is given by 
\begin{align*}
f_{(j)} \colon 
F \longrightarrow F, \ \ x \longmapsto x^{p^n - p^j -1} 
\end{align*}
for some $j \in \left\{ 0, 1, \dots, n-1 \right\}$. 
Let ${\rm Fb}_{(j)}$ be a Frobenius isomorphism 
\begin{align*}
{\rm Fb}_{(j)} \colon F \longrightarrow F, \ \ x \longmapsto x^{p^{n-j}}. 
\end{align*}
Then we have 
\begin{align*}
\left( f_{(j)} \circ {\rm Fb}_{(j)} \right) (x) = \left( x^{p^{n-j}} \right)^{p^n - p^j - 1} 
= x^{-1} 
= f_{p^n-2} (x), 
\end{align*}
where we put $0^{-1} := 0$. 
Hence $f_{(j)}$ is EA-equivalent to the inverse function $f_{p^n-2}$. 
By Proposition 2.1 in \cite{GAPN1}, $f_{(j)}$ is a GAPN function on $F$. 
\end{proof}

\subsection{The other monomial GAPN functions on $\mathbb{F}_{p^n}$}

By simple computations, we can show that 
when $p=3$ and $n \in \left\{ 6, 7, 8 \right\}$, 
there are no monomial GAPN functions $f_d$ on $\mathbb{F}_{3^n}$ with $3 < d^{\circ} (f_d) < 2 n - 1$. 
More generally, we give the following conjecture: 

\begin{conjecture} \label{conj 1}
Let $p$ be an odd prime. 
For sufficiently large $n$, 
there are no monomial GAPN functions $f_d$ on $\mathbb{F}_{p^n}$ with $p < d^{\circ} (f_d) < n (p-1) - 1$. 
In particular, 
the only $p$-exceptional exponents are given by (\ref{p-exceptional}). 
\end{conjecture}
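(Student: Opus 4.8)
\medskip
\noindent\textbf{Toward a proof of Conjecture~\ref{conj 1}.}
The plan is to recast GAPN-ness of a monomial $f_d$ as a statement about $\mathbb{F}_{p^n}$-rational points on a plane curve over $\mathbb{F}_p$, and then to run, in odd characteristic, the algebraic-geometry program of Rodier, Aubry--McGuire--Rodier and Hernando--McGuire (see \cite{AMR10}, \cite{HM2011}) that settled the $p=2$ analogue, Dillon's conjecture (see \cite{Dillon02}). First, if $p\mid d$ one may replace $d$ by $d/p$, which leaves the EA-equivalence class and hence GAPN-ness unchanged (it only post-composes $f_d$ with a Frobenius map), so after finitely many such steps assume $\gcd(d,p)=1$; and, exactly as in the proof of Theorem~\ref{Thm: criterion}, it suffices to take $a=1$, i.e.\ to study $\varphi_d(x)=\tilde D_1 f_d(x)=\sum_{i\in\mathbb{F}_p}(x+i)^d$. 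Since $\varphi_d(x+j)=\varphi_d(x)$ identically in $x$ for every $j\in\mathbb{F}_p$, the $p$ distinct lines $x-y-i=0$ ($i\in\mathbb{F}_p$), which I call the \emph{trivial lines}, all divide $\Phi_d(x,y):=\varphi_d(x)-\varphi_d(y)$ in $\mathbb{F}_p[x,y]$. Because the fibres of $\varphi_d$ on $\mathbb{F}_{p^n}$ are unions of $\mathbb{F}_p$-cosets, $f_d$ is GAPN on $\mathbb{F}_{p^n}$ if and only if every point of the curve $\Phi_d=0$ with both coordinates in $\mathbb{F}_{p^n}$ lies on a trivial line; indeed any other such point $(x_0,y_0)$ produces two distinct cosets $x_0+\mathbb{F}_p$ and $y_0+\mathbb{F}_p$ on which $\tilde D_1 f_d$ is constant, so $\tilde N_{f_d}\ge 2p$.

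Next comes the Weil bound. Suppose $\Phi_d$ has an absolutely irreducible factor $g\in\mathbb{F}_p[x,y]$ that is not a trivial line. Then $g=0$ is an absolutely irreducible affine curve over $\mathbb{F}_p$ of some degree $\delta\le\deg\Phi_d<d$, meeting each trivial line in at most $\delta$ points, whereas by the Weil estimate it has at least $p^n+1-(\delta-1)(\delta-2)p^{n/2}-\delta$ points over $\mathbb{F}_{p^n}$; so as soon as $p^{n/2}$ exceeds roughly $(d-1)^2$ there is an $\mathbb{F}_{p^n}$-point of $\Phi_d=0$ off the trivial lines, and $f_d$ is not GAPN on $\mathbb{F}_{p^n}$. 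This already proves the conjecture for exponents with $d\le p^{(1/4-\varepsilon)n}$, and it reduces the ``in particular'' clause to a finite algebraic problem: if $d$ is $p$-exceptional then $f_d$ is GAPN on $\mathbb{F}_{p^m}$ for arbitrarily large $m$ while $d$, hence $\delta$, stays bounded, so $\Phi_d$ can have no absolutely irreducible factor over $\mathbb{F}_p$ other than the trivial lines (one must additionally, as in the $p=2$ theory, control the finitely many $\mathbb{F}_{p^m}$-points lying on intersections of Galois-conjugate components). Granting a classification of the exponents with this property, one reads off that, up to the Frobenius normalisation leading to~(\ref{p-exceptional}), $d$ has that form, and the converse is Proposition~\ref{main1}.

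The heart of the matter, and the main obstacle, is that classification: to determine all $d$ with $\gcd(d,p)=1$ for which $\Phi_d(x,y)=\varphi_d(x)-\varphi_d(y)$ has no absolutely irreducible factor over $\mathbb{F}_p$ beyond the trivial lines, the expected answer being exactly the exponents with $w_p(d)=p$, i.e.\ those of~(\ref{p-exceptional}) up to Frobenius. This is the precise odd-characteristic counterpart of the hard part of the exceptional-APN program---separating the Gold and Kasami exponents from the ``no absolutely irreducible factor'' ones---and I expect it to require a delicate factorisation analysis of $\varphi_d$ organised by the $p$-adic digit pattern of $d$. A useful structural reduction is the substitution $u=x^p-x$, $v=y^p-y$: since $\Phi_d$ is invariant under all translations $x\mapsto x+j$, $y\mapsto y+j'$ with $j,j'\in\mathbb{F}_p$, one has $\Phi_d(x,y)=\theta_d(u)-\theta_d(v)$, where $\varphi_d(x)=\theta_d(x^p-x)$, and the trivial lines collapse to the single line $u=v$; the problem becomes to study the absolutely irreducible factors of $\theta_d(u)-\theta_d(v)$ other than $u-v$, which one would attack by intersection-theoretic and Newton-polygon estimates aimed at showing that, outside the pattern~(\ref{p-exceptional}), a suitable plane section acquires an $\mathbb{F}_p$-rational absolutely irreducible component.

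Finally, the first assertion of the conjecture---no intermediate-$p$-weight GAPN monomial on $\mathbb{F}_{p^n}$ for \emph{all} large $n$, uniformly over $d<p^n$---demands one further input beyond the classification, because when $d$ is comparable to $p^n$ the Weil bound is vacuous: one has to exclude the possibility that $\Phi_d=0$ carries a high-degree absolutely irreducible component over $\mathbb{F}_{p^n}$ and nevertheless meets $\mathbb{F}_{p^n}^2$ only along the trivial lines. For $p$-weights just below the maximum $n(p-1)-1$ this should follow from a direct counting argument in the combinatorial spirit of Proposition~\ref{Prop: basic result}, forcing $\tilde D_a f_d(x)=b$ to have more than $p$ solutions; but closing the intermediate range---$d$ large, $w_p(d)$ odd and strictly between $p$ and $n(p-1)-1$---seems to require a genuinely new idea, which is, I believe, why the statement is left as a conjecture rather than a theorem. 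Once these ingredients are in place, the ``sufficiently large $n$'' is simply any $n$ beyond the finitely many thresholds produced by these arguments.
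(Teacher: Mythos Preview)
The statement you are addressing is a \emph{conjecture} in the paper; the paper offers no proof, only the supporting computations for $p=3$, $n\in\{6,7,8\}$ mentioned just before it. So there is no ``paper's own proof'' to compare against, and your submission is, as you yourself label it, a programme rather than a proof.

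As a programme it is the natural one and is correctly set up. The reduction to $a=1$ for monomials via the scaling $\tilde D_a f_d(x)=a^d\,\varphi_d(x/a)$ is valid (your pointer to the proof of Theorem~\ref{Thm: criterion} is slightly misleading, since that proof uses the linearisation specific to $w_p(d)=p$, but the conclusion you need holds for all monomials by the scaling argument). The translation invariance $\varphi_d(x+j)=\varphi_d(x)$, the divisibility of $\Phi_d$ by the $p$ trivial lines, the equivalence ``GAPN on $\mathbb{F}_{p^n}$ $\Leftrightarrow$ no $\mathbb{F}_{p^n}$-point of $\Phi_d=0$ off the trivial lines'', and the Artin--Schreier descent $\varphi_d(x)=\theta_d(x^p-x)$ are all correct. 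The Weil-bound step genuinely reduces the ``in particular'' clause (the classification of $p$-exceptional exponents) to the purely algebraic question of which $d$ make $\Phi_d/\prod_{i\in\mathbb{F}_p}(x-y-i)$ have no absolutely irreducible factor over $\mathbb{F}_p$; this is precisely the odd-$p$ analogue of the Hernando--McGuire step, and you are right that it is the hard, currently open, part.

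You are also right to separate the two assertions. The ``in particular'' clause is a statement about a fixed $d$ and infinitely many $n$, so Weil bites. The first assertion quantifies over all $d<p^n$ at once, so $d$ may grow with $n$ and Weil is vacuous; your honest acknowledgement that closing this range ``seems to require a genuinely new idea'' matches the paper's decision to leave the statement as a conjecture. In short: nothing in your outline is wrong, but nothing in it is yet a proof, and the paper does not claim one either.
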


\bibliographystyle{amsplain1}
\bibliography{bibfile}

\providecommand{\bysame}{\leavevmode\hbox to3em{\hrulefill}\thinspace}
\providecommand{\MR}{\relax\ifhmode\unskip\space\fi MR }
\providecommand{\MRhref}[2]{%
  \href{http://www.ams.org/mathscinet-getitem?mr=#1}{#2}
}
\providecommand{\href}[2]{#2}
\begin{thebibliography}{10}

\bibitem{AMR10}
Y.~Aubry, G.~McGuire, and F.~Rodier, A few more functions that are not {APN}
  infinitely often, in {\em Finite fields: theory and applications\/}, Contemp.
  Math., vol. 518, Amer. Math. Soc., Providence, RI, 2010, pp.~23--31.
  \MR{2648536}

\bibitem{BD94}
T.~Beth and C.~Ding, On almost perfect nonlinear permutations, in {\em Advances
  in cryptology---{EUROCRYPT} '93 ({L}ofthus, 1993)\/}, Lecture Notes in
  Comput. Sci., vol. 765, Springer, Berlin, 1994, pp.~65--76. \MR{1290330}

\bibitem{CCMPT2013}
W.-S. Chou, J.~Gomez-Calderon, G.~L. Mullen, D.~Panario, and D.~Thomson,
  Subfield value sets of polynomials over finite fields, {\em Funct. Approx.
  Comment. Math.\/} \textbf{48} (2013), no.~part 1, 147--165. \MR{3086967}

\bibitem{Dillon02}
J.~F. Dillon, Geometry, codes and difference sets: exceptional connections, in
  {\em Codes and designs ({C}olumbus, {OH}, 2000)\/}, Ohio State Univ. Math.
  Res. Inst. Publ., vol.~10, de Gruyter, Berlin, 2002, pp.~73--85. \MR{1948135}

\bibitem{Dobbertin99Niho}
H.~Dobbertin, Almost perfect nonlinear power functions on {${\rm GF}(2^n)$}:
  the {N}iho case, {\em Inform. and Comput.\/} \textbf{151} (1999), no.~1-2,
  57--72. \MR{1692816}

\bibitem{Dobbertin99Welch}
H.~Dobbertin, Almost perfect nonlinear power functions on {${\rm GF}(2^n)$}:
  the {W}elch case, {\em IEEE Trans. Inform. Theory\/} \textbf{45} (1999),
  no.~4, 1271--1275. \MR{1686267}

\bibitem{Dobbertin01}
H.~Dobbertin, Almost perfect nonlinear power functions on {${\rm GF}(2^n)$}: a
  new case for {$n$} divisible by {$5$}, in {\em Finite fields and applications
  ({A}ugsburg, 1999)\/}, Springer, Berlin, 2001, pp.~113--121. \MR{1849084}

\bibitem{Gold1968maximal}
R.~Gold, Maximal recursive sequences with 3-valued recursive cross-correlation
  functions, {\em IEEE Trans. Inform. Theory\/} \textbf{14} (1968), no.~1,
  154--156.

\bibitem{HM2011}
F.~Hernando and G.~McGuire, Proof of a conjecture on the sequence of
  exceptional numbers, classifying cyclic codes and {APN} functions, {\em J.
  Algebra\/} \textbf{343} (2011), 78--92. \MR{2824545}

\bibitem{JW93}
H.~Janwa and R.~M. Wilson, Hyperplane sections of {F}ermat varieties in {${\bf
  P}^3$} in {${\rm char}.\,2$} and some applications to cyclic codes, in {\em
  Applied algebra, algebraic algorithms and error-correcting codes ({S}an
  {J}uan, {PR}, 1993)\/}, Lecture Notes in Comput. Sci., vol. 673, Springer,
  Berlin, 1993, pp.~180--194. \MR{1251978}

\bibitem{Kasami71}
T.~Kasami, The weight enumerators for several classes of subcodes of the
  {$2$}nd order binary {R}eed-{M}uller codes, {\em Information and Control\/}
  \textbf{18} (1971), 369--394. \MR{0281537}

\bibitem{GAPN1}
M.~Kuroda and S.~Tsujie, A generalization of {APN} functions for odd
  characteristic, {\em Finite Fields Appl.\/} \textbf{47} (2017), 64--84.
  \MR{3681081}

\bibitem{Nyberg1994differentially}
K.~Nyberg, Differentially uniform mappings for cryptography, in {\em Advances
  in Cryptology --- EUROCRYPT '93: Workshop on the Theory and Application of
  Cryptographic Techniques Lofthus, Norway, May 23--27, 1993 Proceedings\/}
  (T.~Helleseth, ed.), Springer Berlin Heidelberg, Berlin, Heidelberg, 1994,
  pp.~55--64.

\end{thebibliography}

\vspace{5mm}
\begin{table}[h]
\begin{tabular}{l}
Masamichi Kuroda \\
Department of Mathematics \\
Hokkaido University \\
Sapporo 060-0810 \\
Japan \\
m-kuroda@math.sci.hokudai.ac.jp \\
\end{tabular}
\end{table}

\end{document}